\newtheorem{theorem}{Theorem}[section]
\newtheorem{lemma}[theorem]{Lemma}
\newtheorem{proposition}[theorem]{Proposition}
\newtheorem{corollary}[theorem]{Corollary}
\newtheorem{definition}[theorem]{Definition}
\theoremstyle{remark}
\newtheorem{exm}{Example}
\title{Quasi-Primary Spectrum of a Commutative Ring and a Sheaf of Rings }
\renewcommand*{\@fnsymbol}[1]{\ifcase#1\or1\else\@arabic{\numexpr#1\relax}\fi}
	\author{Zehra B\.{ı}lg\.{ı}n}
	\address{Philosophy Department, Istanbul Medeniyet University, Kadikoy,
		Istanbul, TURKEY}
	\email{zehrabilgin.zb@gmail.com}
		\author{Nesl\.{ı}han Ayşen Özk\.{ı}r\.{ı}şc\.{ı}*}
		\address{Mathematics Department, Yıldız Technical University, Esenler,
			Istanbul, TURKEY}
		\email{aozk@yildiz.edu.tr}
\begin{document}
	
		\date{}
	\maketitle
	
	\let\thefootnote\relax\footnotetext{Key words: Quasi-primary ideals, Quasi-primary spectrum, sheaf of rings. }
	\footnote {MSC 2010 Classification: Primary 13A15, 13A99, Secondary 14A99, 54F65.
		
     *Corresponding author.} 
	
	\begin{abstract}
		 In this work, the set of quasi-primary ideals of a commutative ring with identity is equipped with a topology and is called quasi-primary spectrum. Some topological properties of this space are examined. Further, a sheaf of rings on the quasi-primary spectrum is constructed and it is shown that this sheaf is the direct image sheaf with respect to the inclusion map from the prime spectrum of a ring to the quasi-primary spectrum of the same ring. 
	\end{abstract}
	
	\section{Introduction}
	The set of all prime ideals of a commutative ring $R$, called \emph{prime spectrum} of $R$, denoted by $Spec(R)$, is a well-known concept in commutative algebra. This set is equipped with the famous Zariski topology, where closed sets are defined as $V(I)=\{P\in Spec(R): I\subseteq P\}$ for any ideal $I$ of $R$. Topological properties of $Spec(R)$ are widely examined throughout the years and can be found in many of the standard commutative algebra and algebraic geometry references. Besides, there is a famous sheaf construction, named \emph{the structure sheaf}, on $Spec(R)$ which is a very useful tool to connect algebraic geometry and commutative algebra. For details of the structure sheaf the reader may consult \cite{H}, \cite{S} and \cite{U}.
	
	In \cite{OKK}, the authors generalized the Zariski topology on $Spec(R)$ to the set of primary ideals of a commutative ring $R$, denoted by $Prim(R)$, and they called it \emph{primary spectrum} of $R$. They defined the closed sets as $$V_{rad}(I)=\{Q\in Prim(R): I\subseteq \sqrt{Q}\}$$ for any ideal $I$ of $R$ where $\sqrt{Q}$ denotes the radical of $Q$. They showed that these closed sets satisfy axioms of a topology on $Prim(R)$. They investigated some topological properties of this space and compared them with the well-known properties of $Spec(R)$. We note that, since any prime ideal is primary and equal to its radical, the space $Spec(R)$ is in fact a subspace of $Prim(R)$. 
	
	When \cite{OKK} is examined in detail, it can be realized that the given topological construction depends only the fact that the radical of a primary ideal is prime. So, this topology is in fact valid on a much larger set, the set of ideals whose radicals are prime. These type of ideals are first introduced by L. Fuchs in \cite{F}. He named them as \emph{quasi-primary} ideals. We aim to investigate the set of quasi-primary ideals of a commutative ring $R$ equipped with a topology similar to the one defined in \cite{OKK} and to construct a sheaf of rings on this topological space.

In Section 2, after giving some general topological properties of quasi-primary spectrum, we examine irreducibility and irreducible components of this space. Besides we investigate disconnectedness of the space and finally show that the dimension of the quasi-primary spectrum of a Noetherian local ring is finite. In Section 3, we construct a sheaf of rings on the quasi-primary spectrum of and prove that this sheaf is actually the direct image sheaf under the inclusion map from the prime spectrum to the quasi-primary spectrum. Further, we conclude that this sheaf is in fact a scheme.
	\section{The Quasi-Primary Spectrum of a Ring}
		Throughout this paper all rings are commutative with identity. In this section, we define a topology on the set of all quasi-primary ideals of a ring and examine some properties of this topological space. 
		
		First, we give some (known) properties of quasi-primary ideals we need in the rest of the paper. 
		Let $R$ be a ring. Following \cite{F}, an ideal $I$ of $R$ is called \emph{quasi-primary} if the radical of $I$, denoted by $\sqrt{I}$ is prime. In this section 
		\begin{lemma}
			Let $I$ be an ideal of a ring $R$ and $S$ a multiplicative subset of $R$. Denote the localization of $R$ with respect to $S$ by $R_S$.
			\begin{enumerate}[(i)]
				\item If  $I$ is primary, then $I$ is quasi-primary.
				\item If $I$ is a quasi-primary ideal, the $I$ has only one minimal prime ideal.
				\item If $IR_S$ is a quasi-primary ideal of $R_S$, then $IR_S\cap R$ is a quasi-primary ideal of $R$.
				\item If $I$ is a quasi-primary ideal of $R$ such that $\sqrt{I}\cap S=\varnothing$, then $IR_S$ is a quasi-primary ideal of $R_S$.
			\end{enumerate}
		\end{lemma}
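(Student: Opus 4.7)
The plan is to handle the four parts separately, as each rests on a different standard localization/radical identity; the bulk of the work is really just unwinding definitions once the right fact is recalled.

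For (i), I would observe that $\sqrt{I}$ is the classical prime associated to a primary ideal: if $ab\in\sqrt{I}$ then $(ab)^n\in I$ for some $n$, and primariness of $I$ forces $a^n\in I$ or some power of $b^n$ into $I$, i.e.\ $a\in\sqrt{I}$ or $b\in\sqrt{I}$. So $I$ is quasi-primary by definition. For (ii), the point is that the minimal primes over $I$ all contain $\sqrt{I}$, and $\sqrt{I}$ is the intersection of those minimal primes. If $P_1,P_2$ were distinct minimal primes, then picking $a\in P_1\setminus P_2$ and $b\in P_2\setminus P_1$ yields $ab\in P_1\cap P_2\subseteq\sqrt{I}$; primality of $\sqrt{I}$ forces $a$ or $b$ into $\sqrt{I}\subseteq P_1\cap P_2$, a contradiction. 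Hence $\sqrt{I}$ is itself the unique minimal prime over $I$.

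For (iii) and (iv), the key ingredients I would invoke are the two standard identities
\[
\sqrt{IR_S}=(\sqrt{I})R_S,\qquad \sqrt{J\cap R}=\sqrt{J}\cap R,
\]
together with the bijection between primes of $R_S$ and primes of $R$ disjoint from $S$ via contraction/extension. For (iv), since $\sqrt{I}$ is prime and $\sqrt{I}\cap S=\varnothing$, the extension $(\sqrt{I})R_S$ is prime in $R_S$, and the first identity identifies it with $\sqrt{IR_S}$, proving $IR_S$ quasi-primary. For (iii), given $IR_S$ quasi-primary, $\sqrt{IR_S}$ is prime in $R_S$, so its contraction to $R$ is prime; by the second identity this contraction equals $\sqrt{IR_S\cap R}$, so $IR_S\cap R$ is quasi-primary.

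I do not anticipate a real obstacle here; the whole statement is a package of routine consequences of the definition together with the compatibility of the radical with localization and contraction. The only mildly delicate point is (ii), where one has to remember that a prime ideal cannot be a proper finite intersection of strictly larger (incomparable) primes; the contradiction argument above takes care of this without needing $I$ to have only finitely many minimal primes in the first place, since the failure of uniqueness already gives two distinct minimal primes from which the argument produces its contradiction.
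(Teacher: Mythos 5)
Your overall route is the same as the paper's: the paper dismisses (i) and (ii) as obvious and proves (iii) and (iv) by the single observation $\sqrt{I}R_S=\sqrt{IR_S}$, which is exactly the identity you invoke (together with the standard compatibility of radicals with contraction and the prime correspondence under localization). Your treatments of (i), (iii) and (iv) are correct and complete.

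There is, however, one step in your argument for (ii) that does not hold as written: the inclusion $P_1\cap P_2\subseteq\sqrt{I}$. The containment goes the other way, since $\sqrt{I}$ is the intersection of \emph{all} primes containing $I$, so $\sqrt{I}\subseteq P_1\cap P_2$; if $I$ had a third minimal prime $P_3$, your element $ab$ would lie in $P_1\cap P_2$ but need not lie in $P_3$, hence need not lie in $\sqrt{I}$, and the appeal to primality of $\sqrt{I}$ collapses. (You cannot rule out additional minimal primes at this stage without circularity, since that is essentially what is being proved.) The repair is shorter than the broken argument: $\sqrt{I}$ is itself a prime ideal containing $I$, and it is contained in every prime containing $I$; hence for any minimal prime $P$ over $I$ we have $I\subseteq\sqrt{I}\subseteq P$ with $\sqrt{I}$ prime, so minimality forces $P=\sqrt{I}$. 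Thus $\sqrt{I}$ is the unique minimal prime over $I$, with no case analysis and no choice of elements $a,b$ needed.
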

		\begin{proof}
			(i) and (ii) is obvious. For (iii) and (iv) it is enough to observe that $\sqrt{I}R_S=\sqrt{IR_S}$.
		\end{proof}
	
	  Let
	$$QPrim(R)=\{I\subseteq R : I \text{ a quasi-primary ideal}\}.$$
	For any subset $S$ of $R$, set
	$$V_q(S)=\{Q\in QPrim(R): S\subseteq \sqrt{Q}\}.$$
	Observe that, for any subset $S$ of $R$, if $I=(S)$ we have $V_q(S)=V_q(I)$. If $S=\{a\}$ for $a\in R$, we write $V_q(S)=V_q({a})$.
	
	In \cite{OKK}, the authors defined a topology on the set $Prim(R)$ of primary ideals of a commutative ring using the sets $V_{rad}(I)=\{Q\in Prim(R): I\subseteq \sqrt{Q}\}$ as the closed sets. In this construction they only used the property that a primary ideal has a prime radical. So we realized that the topology axioms for closed sets are in fact satisfied by the sets
	$$V_q(I)=\{Q\in QPrim(R): I\subseteq \sqrt{Q}\}$$
	where $I$ any ideal of $R$. Thus,  $QPrim(R)$ is a topological space with closed sets $V_q(I)$ where $I$ is any ideal of $R$. Since any primary ideal is quasi-primary, we have $Prim(R)$ as a subspace of $QPrim(R)$. 
	
	For the sake of completeness, we note some properties (from Proposition \ref{E:gs} to Corollary \ref{E:ge}) of $V_q(I)$ without proofs. For details, see \cite{OKK}.
	
	\begin{proposition}\label{E:gs}
		Let  $I,J$ be ideals of $R$ and $\{I_\lambda\}_{\lambda\in\Lambda}$ a family of ideals of $R$. Then the followings hold:
		\begin{enumerate}[(i)]
			\item If  $I\subseteq J$, then $V_q(J)\subseteq V_q(I)$.
			\item $V_q(0)=QPrim(R)$ and $V_q(R)=\varnothing$.
			\item $V_q(I\cap J)=V_q(IJ)=V_q(I)\cup V_q(J)$.
			\item $V_q(\sum_{\lambda\in \Lambda}I_\lambda)=\bigcap_{\lambda\in \Lambda}V_q(I_\lambda)$.
			\item $V_q(I)=V_q(\sqrt{I})$.
		\end{enumerate}

		\end{proposition}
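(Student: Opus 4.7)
The plan is to verify each part directly from the definition $V_q(I)=\{Q\in QPrim(R): I\subseteq \sqrt{Q}\}$, together with three elementary facts: (a) for any $Q\in QPrim(R)$ the radical $\sqrt{Q}$ is a proper prime ideal of $R$; (b) the radical operation is idempotent, so $\sqrt{\sqrt{I}}=\sqrt{I}$; and (c) a sum $\sum_{\lambda\in\Lambda}I_\lambda$ is contained in a fixed ideal precisely when each $I_\lambda$ is. Everything except the union formula in (iii) will then be essentially formal; the primality of $\sqrt{Q}$ enters exactly in proving the nontrivial inclusion $V_q(IJ)\subseteq V_q(I)\cup V_q(J)$.

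For (i), chaining $I\subseteq J\subseteq \sqrt{Q}$ yields $Q\in V_q(I)$. For (ii), the inclusion $0\subseteq \sqrt{Q}$ holds for every $Q$, while $R\subseteq \sqrt{Q}$ would force $\sqrt{Q}=R$, contradicting the properness of the prime $\sqrt{Q}$. For (iv), the condition $\sum_\lambda I_\lambda\subseteq \sqrt{Q}$ is, by (c), equivalent to $I_\lambda\subseteq \sqrt{Q}$ for every $\lambda$, which rephrases membership as the intersection $\bigcap_\lambda V_q(I_\lambda)$. For (v), the inclusion $I\subseteq \sqrt{I}$ gives $V_q(\sqrt{I})\subseteq V_q(I)$ by (i); conversely, if $I\subseteq\sqrt{Q}$, then taking radicals and using (b) gives $\sqrt{I}\subseteq\sqrt{\sqrt{Q}}=\sqrt{Q}$, so $Q\in V_q(\sqrt{I})$.

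The step deserving care is (iii). The identity $V_q(I\cap J)=V_q(IJ)$ follows from $IJ\subseteq I\cap J$ (giving $V_q(I\cap J)\subseteq V_q(IJ)$ by (i)) together with the standard containment $(I\cap J)^2\subseteq IJ$, which, since $\sqrt{Q}$ is a radical ideal, supplies the reverse inclusion. For $V_q(IJ)=V_q(I)\cup V_q(J)$, one direction is immediate from (i) using $IJ\subseteq I$ and $IJ\subseteq J$. For the reverse, I would argue by contraposition: assume $Q\in V_q(IJ)$ but $Q\notin V_q(I)\cup V_q(J)$, then choose $a\in I\setminus\sqrt{Q}$ and $b\in J\setminus\sqrt{Q}$; the product $ab$ lies in $IJ\subseteq\sqrt{Q}$, contradicting the primality of $\sqrt{Q}$.

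I expect no obstacles beyond organizing these short deductions cleanly; the entire proposition is an exercise in unwinding the definition, the only essential conceptual input being that $\sqrt{Q}$ is prime for each $Q\in QPrim(R)$. This mirrors exactly how the analogous closed-set axioms are established for $Spec(R)$ and for $Prim(R)$ in \cite{OKK}, which is presumably why the authors omit the proof.
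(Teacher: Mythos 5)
Your proof is correct and follows exactly the standard argument the paper defers to (it states Proposition \ref{E:gs} without proof, citing \cite{OKK}): each part is unwound from the definition, with the primality of $\sqrt{Q}$ entering only in the inclusion $V_q(IJ)\subseteq V_q(I)\cup V_q(J)$ and the properness of $\sqrt{Q}$ giving $V_q(R)=\varnothing$. Nothing is missing.
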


	\begin{corollary}
		The family $\{V_q(I): I \text{ is an ideal of } R\}$ satisfies the axioms of closed sets of a topology on $QPrim(R)$. 
	\end{corollary}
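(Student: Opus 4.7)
The plan is to read off the corollary directly from the three relevant clauses of Proposition \ref{E:gs}, which have already packaged all the content we need. First I would verify that the empty set and the whole space $QPrim(R)$ are closed: clause (ii) of Proposition \ref{E:gs} states that $V_q(R)=\varnothing$ and $V_q(0)=QPrim(R)$, and since $0$ and $R$ are both ideals of $R$, both the empty set and the ambient space appear in the proposed family.

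Next I would handle finite unions. Clause (iii) gives $V_q(I)\cup V_q(J)=V_q(I\cap J)$, and because $I\cap J$ is an ideal of $R$, the union of any two members of the family is again a member; a trivial induction then extends this to any finite union. Finally, for arbitrary intersections I would invoke clause (iv): for any family $\{I_\lambda\}_{\lambda\in\Lambda}$ of ideals of $R$, the sum $\sum_{\lambda\in\Lambda}I_\lambda$ is again an ideal, and $\bigcap_{\lambda\in\Lambda}V_q(I_\lambda)=V_q\bigl(\sum_{\lambda\in\Lambda}I_\lambda\bigr)$ lies in the family. Since every closed set of the proposed topology on $QPrim(R)$ can be written as $V_q(I)$ for an ideal $I$, this is all we need.

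There is no real obstacle here: the work has been done in Proposition \ref{E:gs}, and the corollary is essentially a bookkeeping step verifying the three Kuratowski-style axioms for closed sets. The only thing to be careful about is the phrasing about ideals versus arbitrary subsets; one uses the earlier observation that $V_q(S)=V_q((S))$ so that restricting the indexing family to ideals causes no loss of generality.
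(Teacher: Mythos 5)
Your proof is correct and is exactly the intended argument: the paper states this corollary without proof as an immediate consequence of Proposition \ref{E:gs}, clauses (ii), (iii), and (iv), which is precisely the reading-off you carry out. Your remark about $V_q(S)=V_q((S))$ matches the paper's earlier observation, so nothing is missing.
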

This topology is called \emph{Zariski topology} on $QPrim(R)$ and the space $QPrim(R)$ is named as the \emph{quasi-primary spectrum} of $R$.
We note that any open set in $QPrim(R)$ is of the form $QPrim(R)\backslash V_q(S)$ for some subset $S$ of $R$. 

Set $U_a=QPrim(R)\backslash V_q(a)$ for any $a\in R$. 
\begin{theorem}
	Let $R$ be a ring. The family $\{U_a\}_{a\in R}$ is a base for the Zariski topology on $QPrim(R)$.
\end {theorem}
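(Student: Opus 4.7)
The plan is to show that every open set of $QPrim(R)$ is a union of sets of the form $U_a$, which is the defining condition for $\{U_a\}_{a\in R}$ to be a base. Since each $U_a = QPrim(R)\setminus V_q(a)$ is open (as the complement of a closed set), it suffices to express an arbitrary open set as such a union.

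An arbitrary open set in $QPrim(R)$ has the form $QPrim(R)\setminus V_q(I)$ for some ideal $I$ of $R$ (if the open set is given by a subset $S$, we replace $S$ by the ideal it generates, which by the remark following Proposition \ref{E:gs} gives the same closed set). First I would write $I$ as the sum of all its principal subideals, namely
$$I = \sum_{a\in I}(a).$$
Applying part (iv) of Proposition \ref{E:gs} with index set $\Lambda = I$ and $I_\lambda = (a)$ then yields
$$V_q(I) \;=\; V_q\!\Bigl(\sum_{a\in I}(a)\Bigr) \;=\; \bigcap_{a\in I}V_q((a)) \;=\; \bigcap_{a\in I}V_q(a),$$
where the last equality follows from the convention $V_q(\{a\}) = V_q((a))$ noted just before Proposition \ref{E:gs}.

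Finally, taking complements and applying De Morgan's law gives
$$QPrim(R)\setminus V_q(I) \;=\; \bigcup_{a\in I}\bigl(QPrim(R)\setminus V_q(a)\bigr) \;=\; \bigcup_{a\in I}U_a,$$
exhibiting the given open set as a union of basic open sets. This completes the verification that $\{U_a\}_{a\in R}$ is a base.

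There is essentially no substantive obstacle here: the entire argument is a direct consequence of the arbitrary-intersection formula in Proposition \ref{E:gs}(iv) together with De Morgan's law. The only mild subtlety worth pointing out is that Proposition \ref{E:gs}(iv) is stated for an arbitrary index family, so indexing by the possibly very large set $I$ itself causes no problem.
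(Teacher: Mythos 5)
Your proof is correct, and since the paper states this theorem without proof (deferring to \cite{OKK}), your argument supplies exactly the standard verification one would expect: writing $V_q(I)=\bigcap_{a\in I}V_q(a)$ via Proposition \ref{E:gs}(iv) and taking complements. There is no gap; this is essentially the same route the cited reference takes for the primary spectrum.
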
 
Note that $U_0=\varnothing$ and $U_r=QPrim(R)$ for every unit $r\in R$.
\begin{theorem}
	Let $R$ be a ring and $a,b\in R$. The followings hold:
	\begin{enumerate}[(i)]
		\item $\sqrt{(a)}=\sqrt{(b)}$ if and only if $U_a=U_b$.
		\item $U_{ab}=U_a\cap U_b$.
		\item $U_a=\varnothing$ if and only if $a$ is nilpotent.
		\item $U_a$ is quasi-compact.
	\end{enumerate}

\end{theorem}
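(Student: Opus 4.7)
The plan is to treat the four parts in order, using Proposition \ref{E:gs} freely and relying crucially on the fact that $Spec(R)\subseteq QPrim(R)$ (since every prime $P$ satisfies $\sqrt{P}=P$) to detect radicals from $V_q$.

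For (i), one direction is immediate from Proposition \ref{E:gs}(v): $\sqrt{(a)}=\sqrt{(b)}$ forces $V_q(a)=V_q(\sqrt{(a)})=V_q(\sqrt{(b)})=V_q(b)$, hence $U_a=U_b$. Conversely, $U_a=U_b$ gives $V_q(a)=V_q(b)$; restricting to the prime ideals inside $QPrim(R)$ says that $a$ and $b$ are contained in exactly the same primes of $R$, and intersecting over these primes yields $\sqrt{(a)}=\sqrt{(b)}$. Part (ii) is a direct application of Proposition \ref{E:gs}(iii) with $I=(a)$ and $J=(b)$, using $(a)(b)=(ab)$ and passing to complements. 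For (iii), $U_a=\varnothing$ is equivalent to $a\in\sqrt{Q}$ for every $Q\in QPrim(R)$; specializing to prime ideals forces $a$ into the nilradical, so $a$ is nilpotent, while the converse is clear since a nilpotent element lies in the radical of every ideal.

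The main work is (iv). I would first reduce to a cover of $U_a$ by basic open sets, say $U_a\subseteq\bigcup_{\lambda\in\Lambda}U_{b_\lambda}$. Taking complements and applying Proposition \ref{E:gs}(iv) translates this to
\[
V_q\left(\sum_{\lambda\in\Lambda}(b_\lambda)\right)\subseteq V_q(a).
\]
Evaluating both sides at prime ideals (which lie in $QPrim(R)$ and equal their own radicals) shows that $a$ lies in every prime containing $\sum_\lambda(b_\lambda)$, i.e.\ $a\in\sqrt{\sum_\lambda(b_\lambda)}$. Therefore some power $a^n$ is a \emph{finite} $R$-linear combination $\sum_{i=1}^k r_i b_{\lambda_i}$, and this finiteness is the whole point: for any $Q\in QPrim(R)$ with $b_{\lambda_1},\dots,b_{\lambda_k}\in\sqrt{Q}$ we obtain $a^n\in\sqrt{Q}$ and hence $a\in\sqrt{Q}$, so $U_a\subseteq\bigcup_{i=1}^k U_{b_{\lambda_i}}$.

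The genuinely non-routine step is the middle one in (iv): passing from an inclusion of $V_q$'s on all of $QPrim(R)$ to an honest membership $a\in\sqrt{\sum_\lambda(b_\lambda)}$. This would fail without the presence of prime ideals as test objects inside $QPrim(R)$, so the argument really depends on the inclusion $Spec(R)\subseteq QPrim(R)$. Everything else is bookkeeping with Proposition \ref{E:gs}, and the overall pattern mirrors the classical proof that the principal opens $D(a)\subseteq Spec(R)$ are quasi-compact.
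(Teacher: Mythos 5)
Your proof is correct. The paper actually states this theorem without proof (deferring to \cite{OKK}), and your argument is the standard one that would be expected there: parts (i)--(iii) by Proposition \ref{E:gs} together with testing against the prime ideals sitting inside $QPrim(R)$, and part (iv) by reducing to basic covers, extracting $a\in\sqrt{\sum_\lambda (b_\lambda)}$ via primes, and using the finiteness of the resulting expression for $a^n$ --- exactly the classical $Spec(R)$ argument transported to $QPrim(R)$.
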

\begin{corollary}\label{E:ge}
	Let $R$ be a ring. $QPrim(R)$ is quasi-compact.
\end{corollary}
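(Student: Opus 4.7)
The plan is to derive the corollary directly from the preceding theorem, since most of the work has already been done. The cleanest approach is to observe that the remark preceding the theorem asserts $U_r = QPrim(R)$ for every unit $r\in R$. Taking $r=1$, we have $QPrim(R)=U_1$, and part (iv) of the theorem guarantees that each $U_a$ is quasi-compact; applying this with $a=1$ yields the result immediately. So the proof is essentially a single line of citation.

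If instead one prefers a self-contained argument that does not invoke (iv), the plan is to reduce an arbitrary open cover to a basic open cover using that $\{U_a\}_{a\in R}$ is a base, then exploit the Galois-type correspondence between $V_q$ and ideal containment. Given a cover $QPrim(R)=\bigcup_{i\in\Lambda}U_{a_i}$, taking complements gives $\bigcap_{i\in\Lambda}V_q(a_i)=\varnothing$, and by Proposition \ref{E:gs}(iv) this means $V_q\bigl(\sum_{i\in\Lambda}(a_i)\bigr)=\varnothing$. The key observation is then that $V_q(I)=\varnothing$ forces $I=R$: any proper ideal is contained in a maximal ideal $M$, and $M$ is prime, hence quasi-primary with $\sqrt{M}=M$, so $M\in V_q(I)$. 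Therefore $\sum_{i\in\Lambda}(a_i)=R$, so $1=\sum_{j\in J}r_j a_j$ for some finite $J\subseteq\Lambda$, which gives $\sum_{j\in J}(a_j)=R$ and hence $\bigcup_{j\in J}U_{a_j}=QPrim(R)$. Finally, choosing one member of the original cover containing each $U_{a_j}$ yields a finite subcover.

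There is no real obstacle here; the only subtle point in the alternative route is the implication $V_q(I)=\varnothing\Rightarrow I=R$, which relies on the existence of maximal ideals (equivalently, on the fact that $Spec(R)\subseteq QPrim(R)$ is nonempty whenever $R\neq 0$). Given the structure of the section, I would simply present the one-line proof via $QPrim(R)=U_1$ and defer the more elaborate argument as an aside if desired.
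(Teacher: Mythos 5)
Your proposal is correct, and your primary route is exactly how the corollary is meant to follow from its placement in the text: the paper states it without proof (deferring to \cite{OKK}), immediately after the theorem asserting that each $U_a$ is quasi-compact and the remark that $U_r=QPrim(R)$ for units $r$, so $QPrim(R)=U_1$ is quasi-compact. Your self-contained alternative via $V_q\bigl(\sum_{i}(a_i)\bigr)=\varnothing\Rightarrow\sum_i (a_i)=R$ is also sound, and you correctly flag the one point that needs justification, namely that $V_q(I)=\varnothing$ forces $I=R$ because every maximal ideal is prime and hence lies in $QPrim(R)$.
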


Quasi-primary ideals firstly introduced and examined thoroughly in \cite{F}. It is generally studied on rings satisfying maximal condition; in other words, that every ascending chain of ideals is finite. It is also noted that the quasi-primary ideals in rings satisfying maximal condition can be characterized as follows: A quasi-primary ideal is either a power of a prime ideal or an intermediate ideal between two powers of one and the same prime ideal. In the view of this fact, the following theorem is given for rings satisfying maximal condition.
 
\begin{theorem}	\cite[Theorem 4]{F} \label{E:pq}
	If $ Q_1 $ and $ Q_2 $ are quasi-primary ideals having the radicals $ P_1 $ and $ P_2 $ respectively, and $ P_1\subseteq P_2 $. Then $ Q_1Q_2 $ is also quasi-primary having the radical $ P_1. $
\end{theorem}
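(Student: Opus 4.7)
The plan is to reduce the claim to a simple computation of $\sqrt{Q_1 Q_2}$ using standard radical identities, so that the Noetherian (maximal condition) hypothesis is not actually needed for the argument, even though Fuchs assumes it. By definition, $Q_1 Q_2$ is quasi-primary precisely when $\sqrt{Q_1 Q_2}$ is a prime ideal, and its radical will then give the asserted value $P_1$. So the entire theorem collapses to computing this radical.

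First, I would recall that in any commutative ring the identity $\sqrt{IJ}=\sqrt{I\cap J}=\sqrt{I}\cap\sqrt{J}$ holds for any ideals $I,J$. The inclusion $\sqrt{IJ}\subseteq\sqrt{I\cap J}$ follows from $IJ\subseteq I\cap J$, and the reverse inclusion follows from the observation that if $x^n\in I\cap J$ then $x^{2n}=x^n\cdot x^n\in IJ$. The identity $\sqrt{I\cap J}=\sqrt{I}\cap\sqrt{J}$ is standard and immediate from the definition of radical.

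Applying this with $I=Q_1$ and $J=Q_2$, I would conclude
\[
\sqrt{Q_1 Q_2}=\sqrt{Q_1}\cap\sqrt{Q_2}=P_1\cap P_2.
\]
Since by hypothesis $P_1\subseteq P_2$, we have $P_1\cap P_2=P_1$. Thus $\sqrt{Q_1 Q_2}=P_1$, which is prime (because $Q_1$ is quasi-primary). Therefore $Q_1 Q_2$ is quasi-primary with radical $P_1$, as claimed.

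Because the argument is essentially a two-line radical computation, there is no real obstacle; the only subtlety worth mentioning is that Fuchs's original statement sits inside a discussion of rings with maximal condition, where one might be tempted to exploit the structural description of quasi-primary ideals as intermediates between two powers of a prime. That route is available but unnecessary: the radical identities above hold in any commutative ring with identity, so the theorem in fact needs no chain condition. I would remark on this at the end of the proof to clarify the scope of the result.
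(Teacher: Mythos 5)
Your proof is correct. The paper gives no proof of this statement at all --- it simply cites Fuchs's Theorem 4 --- and your computation $\sqrt{Q_1Q_2}=\sqrt{Q_1}\cap\sqrt{Q_2}=P_1\cap P_2=P_1$ is the standard argument one would supply; your closing remark that no chain condition is needed is accurate and worth keeping, since the paper's surrounding discussion (and the application in Theorem~\ref{E:vq}) is phrased only for rings satisfying the maximal condition even though the result holds in any commutative ring with identity.
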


\begin{theorem} \label{E:vq}
	Let $ R $ be a ring satisfying maximal condition and $ Q_1 $ and $ Q_2 $ be quasi-primary ideals of $ R $ such that $ Q_1\subseteq Q_2 $. If $ Q_1\in V_q(I) $, then $ Q_1Q_2\in V_q(I) $ for any ideal $ I $ of $ R $. 
\end{theorem}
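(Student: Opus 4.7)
The plan is to unpack the hypothesis $Q_1 \in V_q(I)$, which by definition means $I \subseteq \sqrt{Q_1}$, and then to compute $\sqrt{Q_1 Q_2}$ directly using the previously quoted Theorem~\ref{E:pq}.

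First I would set $P_1 = \sqrt{Q_1}$ and $P_2 = \sqrt{Q_2}$; these are prime by the definition of quasi-primary. From the containment $Q_1 \subseteq Q_2$ I get $\sqrt{Q_1} \subseteq \sqrt{Q_2}$, that is, $P_1 \subseteq P_2$. This puts us exactly in the situation of Theorem~\ref{E:pq}, which applies since $R$ satisfies the maximal condition. Invoking it tells us that $Q_1 Q_2$ is quasi-primary with radical $P_1$, i.e.\ $\sqrt{Q_1 Q_2} = \sqrt{Q_1}$.

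Combining this with the hypothesis $I \subseteq \sqrt{Q_1}$ gives $I \subseteq \sqrt{Q_1 Q_2}$, which is precisely the statement that $Q_1 Q_2 \in V_q(I)$. I do not anticipate a genuine obstacle here: the maximal condition hypothesis is used only indirectly, through the appeal to Theorem~\ref{E:pq}, and the rest of the argument is a single substitution in the definition of $V_q(\,\cdot\,)$.
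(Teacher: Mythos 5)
Your proposal is correct and is exactly the argument the paper intends: the paper's proof is the one-line ``It follows from Theorem~\ref{E:pq},'' and you have simply spelled out the details (monotonicity of radicals gives $P_1\subseteq P_2$, Theorem~\ref{E:pq} gives $\sqrt{Q_1Q_2}=\sqrt{Q_1}$, and the conclusion follows from the definition of $V_q(I)$). No gaps.
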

\begin{proof}
	It follows from Theorem \ref{E:pq}.
\end{proof}

It is known that Theorem \ref{E:pq} has no analogue in primary ideal theory. Similarly, Theorem \ref{E:vq} does not valid for the primary spectrum as can be seen in the following example.

\begin{exm}
	Consider the residue class ring $ R=K[X_1,X_2,X_3]/(X_1X_3-X_2^2) $ where $ K $ is a field. It is clear that $ R $ satisfies maximal condition. Let $ x_i $ denote the natural image of $ X_i $ in $ R $ for each $ i=1,2,3 $. Then, the ideal $ P=(x_1,x_2) $ is a prime ideal of $ R $ but $ P^2 $ is not primary \cite[Example 4.12]{SH}. It is trivial that $ P^2 $ is a quasi-primary ideal of $ R $. Now take $ Q_1=Q_2=P. $ Then, we can see that $ P\in V_q(P)\cap V_{rad}(P) $, however $ P^2\in V_q(P)\setminus V_{rad}(P). $
\end{exm}

Now, let us determine the closure of a point $Q\in QPrim(R)$. The closure $Cl(Q)$ of $Q$ is
$$Cl(Q)=\bigcap_{Q\in V_q(S)}V_q(S)=\bigcap_{S\subseteq \sqrt{Q}}V_q(S)=V_q(Q).$$

\begin{definition}
	A topological space $X$ is irreducible if $X$ is nonempty and $X$ cannot be written as a union of two proper closed subsets, or equivalently, any two nonempty open subsets of $X$ intersect.
\end{definition}

\begin{theorem}
	Let $ R $ be a ring. Then $ QPrim(R) $ is an irreducible space if and only if the nilradical of $ R $, $ \mathcal{N}(R) $ is quasi-primary.
\end{theorem}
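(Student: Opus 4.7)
The statement is a clean analogue of the classical fact that $Spec(R)$ is irreducible iff $\mathcal{N}(R)$ is prime, and my plan is to mimic that argument, exploiting the formula $V_q(I_1)\cup V_q(I_2)=V_q(I_1I_2)$ from Proposition \ref{E:gs}(iii). The first observation I would record is that for every $Q\in QPrim(R)$ the radical $\sqrt{Q}$ is a prime ideal, hence $\mathcal{N}(R)\subseteq\sqrt{Q}$. In particular, saying that $\mathcal{N}(R)$ is quasi-primary just means $\sqrt{\mathcal{N}(R)}=\mathcal{N}(R)$ is prime, so the theorem is really asking: when is $R$ guaranteed to have a unique minimal prime?

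For the $(\Leftarrow)$ direction I would argue via a generic point. Assume $\mathcal{N}(R)$ is quasi-primary; then $\mathcal{N}(R)\in QPrim(R)$. Using the closure formula $Cl(Q)=V_q(Q)$ computed just before the definition of irreducibility, and the fact that $\mathcal{N}(R)\subseteq\sqrt{Q}$ for every $Q\in QPrim(R)$, I get
\[
Cl(\mathcal{N}(R))=V_q(\mathcal{N}(R))=QPrim(R).
\]
Thus $\mathcal{N}(R)$ is a dense point of $QPrim(R)$, and any topological space with a dense point is irreducible (since any nonempty open subset must contain that point and hence meet every other nonempty open).

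For the $(\Rightarrow)$ direction I would argue contrapositively on elements. Suppose $QPrim(R)$ is irreducible and let $ab\in\mathcal{N}(R)$. Then for every $Q\in QPrim(R)$ we have $ab\in\mathcal{N}(R)\subseteq\sqrt{Q}$, so $V_q(ab)=QPrim(R)$. By Proposition \ref{E:gs}(iii),
\[
QPrim(R)=V_q(ab)=V_q(a)\cup V_q(b).
\]
Irreducibility forces $V_q(a)=QPrim(R)$ or $V_q(b)=QPrim(R)$. In either case, say $V_q(a)=QPrim(R)$, the point $P\in Spec(R)\subseteq QPrim(R)$ satisfies $a\in\sqrt{P}=P$ for every prime $P$, so $a\in\mathcal{N}(R)$. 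This gives primeness of $\mathcal{N}(R)$, i.e.\ $\mathcal{N}(R)$ is quasi-primary.

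Neither direction presents a real obstacle; the one subtlety worth stating carefully is that $V_q(a)=QPrim(R)$ actually implies $a\in\mathcal{N}(R)$, which is where I use that every prime ideal of $R$ sits inside $QPrim(R)$ (so the test ranges over enough ideals) together with $\sqrt{P}=P$ for primes. This is the only place one might be tempted to take a wrong shortcut, so I would spell it out explicitly rather than fold it into the chain of equalities.
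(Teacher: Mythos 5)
Your proof is correct and follows essentially the same route as the paper: the $(\Leftarrow)$ direction is the paper's argument that $\mathcal{N}(R)$ lies in every nonempty open set, repackaged as the statement that $\mathcal{N}(R)$ is a generic point, and the $(\Rightarrow)$ direction is the paper's contrapositive argument with $U_a\cap U_b=U_{ab}$ rewritten in terms of the complementary closed sets $V_q(a)\cup V_q(b)=V_q(ab)$. The extra care you take in justifying that $V_q(a)=QPrim(R)$ forces $a\in\mathcal{N}(R)$ is sound and matches the role played in the paper by the observation $U_a=\varnothing$ iff $a$ is nilpotent.
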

\begin{proof}
	Let $ \mathcal{N}(R) $ be a quasi-primary ideal and $ U,V $ be two non-empty open subsets of $ QPrim(R)$. Suppose that $ Q_1\in U\backslash  V$. Then there exists a subset $ S $ of $ R $ such that $ U=X\setminus V_q(S) $. This implies $ Q_1\notin V_q(S) $, that is, $ S\nsubseteq \sqrt{Q_1} $. Then we get $ S\nsubseteq \sqrt{\mathcal{N}(R)} $ since $ \mathcal{N}(R)  \subseteq \sqrt{Q_1} $. Hence $ \mathcal{N}(R) \notin V_q(S) $. Then, we obtain $ \mathcal{N}(R) \in U $. In a similar way, we get $ \mathcal{N}(R)\in V $. Consequently, $ U\cap V\neq  \emptyset $.
	For the other part, let $ QPrim(R) $ be an irreducible space. Assume that $ \mathcal{N}(R) $ is not quasi-primary. Then $ \sqrt{\mathcal{N}(R)} $ is not prime. Then there exist $ a,b\in R $ such that $ a,b\notin \sqrt{\mathcal{N}(R)} $ but $ ab\in \mathcal{N}(R) $. Since $ a\in \sqrt{\mathcal{N}(R)}$, $V_q(a)\neq QPrim(R) $, that is, $ U_a\neq\emptyset $. Similarly, we get $ U_b\neq \emptyset $. Since $ ab\in \mathcal{N}(R) $ we have $ U_{ab}=\emptyset $. It follows that $ U_a\cap U_b=U_{ab}=\emptyset $ for  two non-empty open subsets $ U_a $ and $ U_b $. Hence $ QPrim(R) $ is not irreducible.
\end{proof}
There is a one to one correspondence between points of $ QPrim(R) $ and irreducible closed subsets of $ QPrim(R) $. The next theorem gives that correspondence.

\begin{theorem}\label{E:correspondence}
	Let $ Y $ be a subset of $ QPrim(R) $. Then $ Y $ is an irreducible closed subset of $ QPrim(R) $ if and only if $ Y=V_q(Q) $ for some $ Q\in QPrim(R) $.
\end{theorem}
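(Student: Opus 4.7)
The plan is to prove the two directions separately, relying on two ingredients: the closure computation $Cl(Q)=V_q(Q)$ recorded right before the statement, and the classical identity $\sqrt{I}=\bigcap_{P\in V(I)} P$ used to distinguish closed subsets.

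For the sufficiency direction, suppose $Y=V_q(Q)$ for some $Q\in QPrim(R)$. Then $Y$ is closed by definition of the topology, and the closure formula stated just above the theorem gives $Y=Cl(Q)$. A point-closure in any topological space is automatically irreducible: if $Cl(Q)=Y_1\cup Y_2$ with each $Y_i$ closed, then $Q$ lies in one of them, say $Y_1$, whence $Cl(Q)\subseteq Y_1$ and so $Y=Y_1$. This yields one half of the equivalence with no further work.

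For the necessity direction, let $Y$ be irreducible and closed. Closedness furnishes an ideal $I$ with $Y=V_q(I)$, and by Proposition \ref{E:gs}(v) I may replace $I$ with $\sqrt{I}$ and assume $I$ is a radical ideal. It then suffices to show $I$ is prime, for in that case $Q:=I$ is itself quasi-primary (as $\sqrt{Q}=Q=I$ is prime) and $V_q(Q)=V_q(I)=Y$. To prove $I$ prime I argue by contradiction: assume $a,b\in R\setminus I$ with $ab\in I$, and I claim the identity
\[
V_q(I)=V_q(I+(a))\cup V_q(I+(b))
\]
provides a nontrivial decomposition. The $\supseteq$ inclusion is immediate from Proposition \ref{E:gs}(i). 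For $\subseteq$, any $Q\in V_q(I)$ satisfies $ab\in I\subseteq\sqrt{Q}$, and primality of $\sqrt{Q}$ forces $a\in\sqrt{Q}$ or $b\in\sqrt{Q}$, placing $Q$ in the appropriate piece. To see both pieces are proper, use that $I=\sqrt{I}$ is the intersection of the prime ideals containing it: since $a\notin I$ there exists a prime $P\supseteq I$ with $a\notin P$, and this $P$, being prime, is a point of $QPrim(R)$ with $\sqrt{P}=P$, so $P\in V_q(I)\setminus V_q(I+(a))$; symmetrically for $b$. This contradicts irreducibility of $Y$, completing the proof.

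The main obstacle is the properness step, i.e.\ actually exhibiting a quasi-primary ideal that witnesses $V_q(I+(a))\subsetneq V_q(I)$. The clean trick is to drop down into the subspace $Spec(R)\subseteq QPrim(R)$ and invoke the standard description of $\sqrt{I}$ as an intersection of primes; the remainder of the argument is formal manipulation with the $V_q$ operator using the properties already established in Proposition \ref{E:gs}.
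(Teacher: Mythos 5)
Your proof is correct and follows essentially the same route as the paper's: the forward direction via $Cl(Q)=V_q(Q)$ and irreducibility of point-closures, and the converse by assuming $\sqrt{I}$ is not prime and splitting $V_q(I)$ along $a$ and $b$ with $ab\in\sqrt I$ (the paper writes the decomposition as $V_q(I)\subseteq V_q(a)\cup V_q(b)$, which agrees with your $V_q(I+(a))\cup V_q(I+(b))$ after intersecting with $Y$). Your only addition is to make the properness step explicit by exhibiting a prime $P\supseteq I$ avoiding $a$, a detail the paper leaves implicit in the phrase ``because of $a,b\notin\sqrt I$.''
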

\begin{proof}
	Let $ Y=V_q(Q) $ for any $ Q\in QPrim(R) $. Since $ V_q(Q)=Cl({Q}) $ and $ Cl({Q}) $ is irreducible, $ Y $ is an irreducible closed subset of $ QPrim(R) $. Conversely, let $ Y $ be an irreducible closed subset of $ QPrim(R) $. Then $ Y=V_q(I) $ for some ideal $ I $ of $ R $. Now suppose that $ I\notin QPrim(R) $. Then $ \sqrt{I} $ is not prime. Then there are elements $ a,b\in R $ such that $ ab\in I $ but $ a,b\notin \sqrt{I} $. Thus, $ Y=V_q(I)\subseteq V_q(ab)=V_q(a)\cup V_q(b)$. Also $ V_q(a)\neq V_q(I) $ and $ V_q(b)\neq V_q(I) $ because of $ a,b\notin \sqrt{I} $. So we obtain $ Y $ reducible which contradicts our assumption. 
\end{proof}

Let $ I $ be an ideal of a ring satisfying maximal condition. Then, by \cite[Theorem 5]{F}, the ideal $ I $ is the intersection of a finite number of quasi-primary ideals, say $ Q_1, \dots,Q_n $ with radicals $ P_1,\dots, P_n $, respectively. Hence, $ \sqrt{I}=\bigcap^n_{i=1}\sqrt{Q_i}=\bigcap^n_{i=1}P_i $, that is, there is no prime ideal containing $ I $ other than $ P_i $'s where $ i=1, \dots, n  $. Then, for any ideal $ I $ in a ring satisfying maximal condition, every closed subset $ V_q(I) $ can be written as the finite union of irreducible closed sets, that is,  $ V_q(I)=V_{q}(P_1)\cup \dots \cup V_q(P_n) $ by Proposition \ref{E:gs} (iii) and (v). 
	\\
	
Let $V$ be a closed subset of a topological space $X$. A dense point of $V$ is called a \emph{generic point}. By the above theorem, we conclude that every irreducible closed subset of $QPrim(R)$ has a generic point.

The maximal irreducible subsets of a topological space $X$ are called \emph{irreducible components}.
\begin{theorem}
	Irreducible components of $QPrim(R)$ are the closed sets $V_q(Q)$ where $\sqrt{Q}$ is a minimal prime ideal of $R$.
\end{theorem}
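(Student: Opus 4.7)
The plan is to reduce the statement to a comparison of the prime radicals of quasi\-primary ideals, using Theorem \ref{E:correspondence} as the starting point. Since Theorem \ref{E:correspondence} already identifies every irreducible closed subset of $QPrim(R)$ with some $V_q(Q)$, $Q\in QPrim(R)$, the problem becomes: among these sets, which are maximal with respect to inclusion?

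First I would establish the key comparison lemma: for $Q,Q'\in QPrim(R)$,
$$V_q(Q)\subseteq V_q(Q') \iff \sqrt{Q'}\subseteq \sqrt{Q}.$$
The ``$\Leftarrow$'' direction is immediate from Proposition \ref{E:gs} (i) together with $V_q(I)=V_q(\sqrt{I})$ from Proposition \ref{E:gs} (v). For ``$\Rightarrow$'', note that $Q\in V_q(Q)$ since $\sqrt{Q}\subseteq\sqrt{Q}$; the hypothesis forces $Q\in V_q(Q')$, i.e.\ $\sqrt{Q'}\subseteq\sqrt{Q}$.

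Next I would observe that the collection $\{\sqrt{Q}:Q\in QPrim(R)\}$ coincides with $Spec(R)$. One inclusion is definitional (radicals of quasi\-primary ideals are prime); the other uses the fact that every prime ideal $P$ is itself quasi\-primary, with $\sqrt{P}=P$. Combined with the comparison lemma, this shows that the partially ordered set of irreducible closed subsets of $QPrim(R)$ is anti\-isomorphic to $Spec(R)$ via $V_q(Q)\mapsto \sqrt{Q}$.

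Finally, maximal irreducible closed subsets correspond to minimal elements of $Spec(R)$. Thus $V_q(Q)$ is an irreducible component if and only if $\sqrt{Q}$ is a minimal prime ideal of $R$. I do not expect a serious obstacle here; the only subtle point is the ``only if'' direction of the comparison lemma, for which the trick is simply to evaluate the hypothesis on the distinguished point $Q\in V_q(Q)$.
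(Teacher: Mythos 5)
Your proposal is correct and follows essentially the same route as the paper: both reduce via Theorem \ref{E:correspondence} to comparing sets $V_q(Q)$, and both use the order-reversing correspondence $V_q(Q)\mapsto\sqrt{Q}$ (with the fact that every prime is quasi-primary) to match maximal irreducible closed sets with minimal primes. Your explicit statement and proof of the comparison lemma is a slightly more careful packaging of a step the paper treats in passing, but the argument is the same.
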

\begin{proof}
	By Theorem \ref{E:correspondence}, any irreducible closed subset of $QPrim(R)$ can be written of the form $V_q(Q)$ for some quasi-primary ideal $Q$ of $R$. Assume that $V_q(Q)$ is not maximal. Then $V_q(Q)\subset V_q(Q')$ for some quasi-primary ideal $Q'$ of $R$. Since $  V_q(Q')=V_q(\sqrt{Q'})$ we have $\sqrt{Q'}\subset \sqrt{Q}$. Hence $\sqrt{Q}$ is not minimal. Conversely, assume that $\sqrt{Q}$ is not a minimal prime ideal. Then there is a prime ideal $P$ of $R$ such that $P\subset \sqrt{Q}$. Then we get $V_q(Q)\subset V_q(P)$. So $V_q(Q)$ is not a maximal irreducible set.
\end{proof}
The following lemma is easy to prove, so we left it as an exercise.
\begin{lemma}
	Let $R=R_1\times R_2\times \cdots \times R_n$ where $R_i$ are rings. Then
	$$QPrim(R)\cong QPrim (R_1)\times QPrim(R_2)\times \cdots\times QPrim(R_n).$$
\end{lemma}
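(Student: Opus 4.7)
The plan is to proceed by induction on $n$, so it suffices to treat $R = R_1 \times R_2$. I would start from the standard facts that every ideal of $R$ decomposes uniquely as $I = I_1 \times I_2$ with $I_j$ an ideal of $R_j$, and that the radical respects this splitting: $\sqrt{I_1 \times I_2} = \sqrt{I_1} \times \sqrt{I_2}$. Both are elementary and I would invoke them without proof.

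Next I would classify the quasi-primary ideals of $R$. The prime ideals of $R_1 \times R_2$ are exactly those of the form $P_1 \times R_2$ with $P_1 \in \mathrm{Spec}(R_1)$ or $R_1 \times P_2$ with $P_2 \in \mathrm{Spec}(R_2)$. Hence $\sqrt{I_1}\times\sqrt{I_2}$ is prime iff exactly one factor equals the whole ring $R_j$ while the other is a prime of the remaining $R_k$. It follows that $I_1 \times I_2$ is quasi-primary iff either $I_1 = R_1$ and $I_2$ is quasi-primary in $R_2$, or symmetrically. This furnishes a natural set-level bijection
\[
\varphi \colon QPrim(R_1) \sqcup QPrim(R_2) \longrightarrow QPrim(R),\qquad Q_1 \mapsto Q_1\times R_2,\ \ Q_2 \mapsto R_1\times Q_2.
\]
I read the ``$\times$'' in the statement as this disjoint union $\sqcup$, in parallel with the familiar identification $\mathrm{Spec}(R_1\times R_2) \cong \mathrm{Spec}(R_1)\sqcup \mathrm{Spec}(R_2)$; it is the only reading under which the cardinalities and the topology work out.

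Finally I would promote $\varphi$ to a homeomorphism. For any ideal $J = J_1 \times J_2$ of $R$, a point $Q_1 \times R_2$ lies in $V_q(J)$ iff $J_1 \subseteq \sqrt{Q_1}$ (the $R_2$-component contributes nothing), and symmetrically on the $R_2$-side; hence $\varphi^{-1}(V_q(J)) = V_q(J_1) \sqcup V_q(J_2)$. Since every ideal of $R$ is a product ideal, this accounts for every Zariski-closed set on both sides, so $\varphi$ and $\varphi^{-1}$ send closed sets to closed sets. The only real obstacle is the notational one flagged above; the algebraic content -- ideals split in products, radicals split, primes of a product are ``rectangular'' -- is entirely routine, which is why the authors leave it as an exercise.
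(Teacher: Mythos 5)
The paper offers no proof of this lemma at all --- it is explicitly ``left as an exercise'' --- so there is no argument of the authors to compare yours against; judged on its own, your proof is correct and complete. Your most valuable observation is that the statement cannot be read literally as a Cartesian product of topological spaces: for a field $K$ the space $QPrim(K)$ is a single point (only the zero ideal has prime radical), so $QPrim(K)\times QPrim(K)$ would again be a single point, whereas $QPrim(K\times K)$ has exactly two points, namely $K\times 0$ and $0\times K$. The correct assertion, which is what you prove, is $QPrim(R_1\times\cdots\times R_n)\cong QPrim(R_1)\sqcup\cdots\sqcup QPrim(R_n)$, in exact parallel with $Spec(R_1\times R_2)\cong Spec(R_1)\sqcup Spec(R_2)$; this disjoint-union form is also the one implicitly needed in the disconnectedness theorem that follows the lemma, where $QPrim(R)$ is written as a union of two disjoint closed pieces coming from an idempotent. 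Your classification of the quasi-primary ideals of a product (exactly one coordinate equal to the whole ring, the other coordinate quasi-primary, since the primes of $R_1\times R_2$ are rectangular and $\sqrt{I_1\times I_2}=\sqrt{I_1}\times\sqrt{I_2}$) is correct, and the verification that $\varphi^{-1}\bigl(V_q(J_1\times J_2)\bigr)=V_q(J_1)\sqcup V_q(J_2)$, together with the fact that every closed set on either side arises this way, does establish the homeomorphism. The induction reducing to $n=2$ is routine. In short: the algebraic content is exactly the exercise the authors intended, and you have additionally repaired the statement so that it is actually true.
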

\begin{theorem}
	Let $R$ be a ring. The following are equivalent:
	\begin{enumerate}[(i)]
		\item $QPrim(R)$ is disconnected.
		\item $R\cong R_2\times R_2$ where $R_1$ and $R_2$ are nonzero rings.
		\item $R$ contains an idempotent.
		
	\end{enumerate}
\end{theorem}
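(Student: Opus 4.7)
The plan is to close the cycle (iii) $\Rightarrow$ (ii) $\Rightarrow$ (i) $\Rightarrow$ (iii). The first two implications are light, while the last one is the substantive step. For (iii) $\Rightarrow$ (ii) I would use the Peirce decomposition: a nontrivial idempotent $e$ yields $R\cong eR\times(1-e)R$, both factors nonzero because $e\neq 0,1$. The reverse (ii) $\Rightarrow$ (iii), where the idempotent is of course taken to be nontrivial, is immediate from $(1,0)\in R_1\times R_2$.

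For (ii) $\Rightarrow$ (i), rather than invoking the preceding product lemma I would argue directly with the idempotent $e$. Using Proposition \ref{E:gs}, one has
$$V_q(e)\cup V_q(1-e)=V_q\bigl((e)(1-e)\bigr)=V_q(0)=QPrim(R),$$
while
$$V_q(e)\cap V_q(1-e)=V_q\bigl((e)+(1-e)\bigr)=V_q(R)=\varnothing.$$
Neither piece is empty: since every prime ideal is quasi-primary and equals its own radical, a vanishing $V_q(e)$ would force $e$ to avoid every prime, hence be a unit, which a nontrivial idempotent cannot be. This supplies the required disconnection.

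For (i) $\Rightarrow$ (iii), suppose $QPrim(R)=V_q(I_1)\sqcup V_q(I_2)$ with both pieces nonempty. Rewriting the union as $V_q(I_1 I_2)$ forces $\sqrt{I_1 I_2}\subseteq \mathcal{N}(R)$, so every product $ab$ with $a\in I_1$, $b\in I_2$ is nilpotent. Rewriting the intersection as $V_q(I_1+I_2)$ and using $Spec(R)\subseteq QPrim(R)$ forces $I_1+I_2$ to lie in no prime ideal, hence $I_1+I_2=R$. Pick $a\in I_1$, $b\in I_2$ with $a+b=1$, and choose $n$ with $(ab)^n=0$. The key trick is to expand $(a+b)^{2n-1}$ and split the sum at the index where the exponents of $a$ and $b$ cross $n$, obtaining a decomposition $1=a^n d+b^n c$ for some $c,d\in R$. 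The product of the two summands is $(ab)^n cd=0$, so setting $f=a^n d$ gives $f^2=f(f+b^n c)=f$, so $f$ is idempotent. Nontriviality is forced by nonemptiness of the two closed sets: if $f=0$ then $b^n c=1$ makes $b$ a unit and collapses $V_q(I_2)$, and symmetrically for $f=1$.

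The hard part is (i) $\Rightarrow$ (iii): translating a topological disconnection into an honest algebraic idempotent requires the binomial-expansion trick, arranged so that the two resulting pieces are simultaneously orthogonal and sum to $1$, together with a cross-reference to the nonemptiness of each $V_q(I_i)$ in order to rule out $f\in\{0,1\}$. Everything else is a direct combination of Proposition \ref{E:gs} with standard facts about radicals and units.
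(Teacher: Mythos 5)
Your proposal is correct, and the substantive direction is handled by a genuinely different route than the paper's. The paper closes the cycle as (i)$\Rightarrow$(ii)$\Rightarrow$(iii)$\Rightarrow$(i): for (i)$\Rightarrow$(ii) it extracts ideals $I,J$ with $I+J=R$ from the disconnection and passes to $R/I\times R/J$ via the Chinese Remainder Theorem, and for (iii)$\Rightarrow$(i) it writes $QPrim(R)$ as the disjoint union of $\{Q: e\in\sqrt{Q}\}$ and $\{Q:1-e\in\sqrt{Q}\}$, without commenting on why either piece is nonempty. You instead prove (i)$\Rightarrow$(iii) directly, manufacturing orthogonal idempotents from $I_1+I_2=R$ together with $I_1I_2\subseteq\mathcal{N}(R)$ via the expansion of $(a+b)^{2n-1}$, and recover (ii) separately from the Peirce decomposition. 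Your route buys real robustness at the one point where the paper is thin: disjointness of $V_q(I)$ and $V_q(J)$ together with their union being all of $QPrim(R)$ only yields that $IJ$ is \emph{nilpotent}, not that $I\cap J=0$, so the paper's jump from CRT (which gives $R/(I\cap J)\cong R/I\times R/J$) to $R\cong R/I\times R/J$ is not justified as written; your binomial trick is precisely the device that absorbs the nilpotence and still produces an honest nontrivial idempotent, and your nonemptiness checks (a vanishing $V_q(I_i)$ forces a unit in $I_i$) supply the detail the paper omits in the converse direction. The price is a slightly longer computation; the paper's version is shorter but, in the (i)$\Rightarrow$(ii) step, incomplete without your argument or something equivalent to it.
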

\begin{proof}
	(i)$\implies$(ii) Assume that $QPrim(R)$ is disconnected. Then $QPrim(R)=V_q(I)\cap V_q(J)$ for some ideals $I$ and $J$ of $R$ where $V_q(I)\cap V_q(J)
=\varnothing$. Then we have $I+J=R$ and $I\cap J=IJ$. So, we get $R=R/I\times R/J$.

(ii)$\implies$(iii) Assume that $R\cong R_2\times R_2$ where $R_1$ and $R_2$ are nonzero rings via an isomorphism $ \phi$. Then $\phi^{-1}(1,0)$ is a nontrivial idempotent of $R$.

(iii)$\implies$(i) Assume that $e\in R$ is an idempotent. Then $QPrim(R)=X_1\cup X_2$ where 
$X_1=\{Q\in QPrim(R): e\in \sqrt{Q}\}$ and $X_1=\{Q\in QPrim(R): 1-e\in \sqrt{Q}\}$. Observe that $X_1\cap X_2=\varnothing$. Thus, $QPrim(R)$ is disconnected.
	\end{proof}
	
The \emph{dimension} of a topological space $X$ is the number $n$ such that $X$ has a chain of irreducible closed sets
$$V_1\subset V_2 \subset \cdots \subset V_n$$
and no such chain more that $n$ terms.
\begin{theorem}
	Let $R$ be a Noetherian local ring. Then the dimension of $QPrim(R)$ is finite.
\end{theorem}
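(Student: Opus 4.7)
The plan is to reduce the computation of $\dim QPrim(R)$ to the Krull dimension of $R$ and then invoke the standard finiteness theorem for Noetherian local rings. The bridge is Theorem \ref{E:correspondence}, which identifies irreducible closed subsets of $QPrim(R)$ with the points of $QPrim(R)$, combined with Proposition \ref{E:gs}(v), which collapses the parametrization down to prime ideals.

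First I would note that every irreducible closed subset of $QPrim(R)$ is of the form $V_q(Q)$ for some quasi-primary $Q$, and by Proposition \ref{E:gs}(v) we may replace $Q$ by the prime $P=\sqrt{Q}$. Thus the irreducible closed subsets are precisely the sets $V_q(P)$ with $P \in Spec(R)$. Next I would verify that the assignment $P \mapsto V_q(P)$ is an inclusion-reversing bijection: if $V_q(P_1) \subseteq V_q(P_2)$, then since $P_1 \in V_q(P_1)$ (as $P_1 \subseteq \sqrt{P_1}=P_1$) we get $P_2 \subseteq \sqrt{P_1}=P_1$; conversely, $P_2 \subseteq P_1$ immediately gives $V_q(P_1) \subseteq V_q(P_2)$ from the definition. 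Strict inclusions correspond to strict inclusions because distinct primes give distinct $V_q(P)$.

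Consequently, a chain of irreducible closed subsets
\[
V_q(P_1) \subsetneq V_q(P_2) \subsetneq \cdots \subsetneq V_q(P_n)
\]
in $QPrim(R)$ is the same datum as a chain $P_n \subsetneq P_{n-1} \subsetneq \cdots \subsetneq P_1$ of prime ideals in $R$. Hence $\dim QPrim(R) = \dim Spec(R)$, the Krull dimension of $R$. Since $R$ is a Noetherian local ring with maximal ideal $\mathfrak{m}$, Krull's height (principal ideal) theorem gives $\dim R = \mathrm{ht}(\mathfrak{m}) \leq \mu(\mathfrak{m}) < \infty$, where $\mu(\mathfrak{m})$ is the minimal number of generators of $\mathfrak{m}$. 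Therefore $\dim QPrim(R)$ is finite.

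There is no real obstacle here; the only step that requires a moment's care is confirming the inclusion-reversing nature of $P \mapsto V_q(P)$ and justifying that one may always replace the quasi-primary representative by its (prime) radical without altering the closed set or the chain of strict inclusions. Everything else is a direct appeal to results already established in the paper and to the standard finiteness of Krull dimension in the Noetherian local setting.
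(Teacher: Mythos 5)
Your proposal is correct and follows essentially the same route as the paper: both translate a chain of irreducible closed subsets into a chain of prime ideals via $V_q(Q)=V_q(\sqrt{Q})$ and then invoke the finiteness of the Krull dimension of a Noetherian local ring (the paper cites \cite[Corollary 11.11]{AM}, which is the same fact you derive from Krull's height theorem). Your write-up is somewhat more careful in making the order-reversing bijection $P\mapsto V_q(P)$ explicit, but the underlying argument is identical.
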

\begin{proof}
	Let $$X_1\subset X_2\subset \cdots \subset X_n\subset \cdots$$ be a chain of irreducible subsets. This chain can be written as
	$$V_q(Q_1)\subset V_q(Q_2)\subset \cdots \subset V_q(Q_n)\subset \cdots$$
	where $Q_i\in QPrim(R)$. Let $P_i=\sqrt{Q_i}$ for each $i$. Then we have
	$$\cdots\subset P_n\subset \cdots\subset P_2\subset P_1.$$
	By \cite[Corollary 11.11]{AM}, the dimension of $R$ is finite. So the above chain of prime ideals must terminate. Therefore the dimension of $QPrim(R)$ is finite, and in fact equal to the dimension of $R$.
\end{proof}
	\section{A Sheaf of Rings on the Quasi-Primary Spectrum}
	In this section we define a sheaf of rings on the quasi-primary spectrum.
	Let $\phi:R\rightarrow R'$ be a ring homomorphism. For any $Q\in QPrim(R')$, it is easy to show that $f^{-1}(Q)\in QPrim(R)$. So $f$ induces the map 
	$$\phi^{a}:QPrim(R')\rightarrow QPrim(R)$$
	which is called the \emph{associated map} of $\phi$.
	
	For any $A\subseteq R$ we have $(\phi^a)^{-1}(V(A))=V(\phi(A))$. So the map $\phi^a$ is continuous.
	
	Let $S\subseteq R$ be a multiplicative subset of $R$. 
	Let $\phi: R\rightarrow R_S$ be the canonical homomorphism. Since $\sqrt{IR_S}=\sqrt{I}R_S$ for any ideal $I$ of $R$, the map $\phi^a$ is an inclusion. The set $U_S=\phi^a(QPrim(R_S))$ is equal to the set of quasi-primary ideals of $R$ whose radicals are disjoint from $S$. There is a one-to-one correspondence between quasi-primary ideals of $R_S$ and quasi-primary ideals of $R$ whose radicals are disjoint from $S$. So, the space $QPrim(R_S)$ is homeomorphic to the subspace $U_S$ of $QPrim(R)$. 
	
	In particular, if $S=\{f^i: i\in\mathbb{N}\}$ then $U_S=QPrim(R)\backslash V_q(f)=U_f$. So basis sets $U_f$ are homeomorphic to $QPrim(R_f)$ where $R_f$ is the localization of $R$ with respect to the multiplicative subset $S=\{f^i: i\in\mathbb{N}\}$.
	
\begin{lemma}\label{E:cont}
     $U_a\subseteq U_b$ if and only if $a\in \sqrt{(b)}$ for any $a,b\in R$.
\end{lemma}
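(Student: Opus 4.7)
The plan is to rewrite the set inclusion $U_a\subseteq U_b$ in terms of the closed sets, namely $V_q(b)\subseteq V_q(a)$, and then prove the two implications separately. Unpacking the definition, $V_q(b)\subseteq V_q(a)$ means: for every quasi-primary ideal $Q$ with $b\in\sqrt{Q}$, we also have $a\in\sqrt{Q}$.

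For the direction $(\Leftarrow)$, I would take $a\in\sqrt{(b)}$ and any $Q\in V_q(b)$, so that $b\in\sqrt{Q}$. Then $(b)\subseteq\sqrt{Q}$, and since $\sqrt{Q}$ is prime (hence radical), $\sqrt{(b)}\subseteq\sqrt{Q}$, giving $a\in\sqrt{Q}$, i.e., $Q\in V_q(a)$. Thus $V_q(b)\subseteq V_q(a)$, equivalently $U_a\subseteq U_b$. This step is essentially formal.

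For the direction $(\Rightarrow)$, I would argue by contrapositive: suppose $a\notin\sqrt{(b)}$. The key tool is the standard fact that $\sqrt{(b)}$ equals the intersection of all prime ideals containing $b$; hence there exists a prime ideal $P$ of $R$ with $b\in P$ and $a\notin P$. Now every prime ideal is quasi-primary with $\sqrt{P}=P$, so $P\in QPrim(R)$, $P\in V_q(b)$, and $P\notin V_q(a)$. Consequently $V_q(b)\not\subseteq V_q(a)$, i.e., $U_a\not\subseteq U_b$.

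The only potential obstacle is the backward direction, where one must produce a quasi-primary ideal separating $a$ from $\sqrt{(b)}$; but since prime ideals themselves lie in $QPrim(R)$, the classical characterization of the radical as the intersection of containing primes supplies the required witness immediately. No finer structure of quasi-primary ideals is needed.
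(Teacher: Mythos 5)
Your proof is correct and follows essentially the same route as the paper's: both directions rest on the facts that $\sqrt{(b)}$ is the intersection of the prime ideals containing $b$, that $\sqrt{Q}$ is prime for quasi-primary $Q$, and that prime ideals themselves lie in $QPrim(R)$. Your contrapositive phrasing of the forward direction (producing a single separating prime) is just a restatement of the paper's direct argument, so there is no substantive difference.
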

\begin{proof}
	Assume that $U_a\subseteq U_b$ for some $a,b\in R$. Then, for any $Q\in QPrim(R)$, we have $a\not\in\sqrt{Q}$ implies $b\not\in\sqrt{Q}$. That means $b\in\sqrt{Q}$ implies $a\in \sqrt{Q}$. Since $QPrim(R)$ contains prime ideals, this observation yields that $a\in\sqrt{(b)}$. Conversely, assume that $a\in \sqrt{(b)}$ for some $a,b\in R$. Let $q\in U_a$. Then $a\not\in \sqrt{Q}$. Since $a$ is contained in the intersection of all prime ideals that contain $b$, we obtain that $b\not\in \sqrt{Q}$. Therefore, we have $Q\in U_b$.
\end{proof}

Our aim is to construct a sheaf of rings on $QPrim(R)$. We assign to each open set $U_a$ the ring $\mathcal{F}(U_a):=R_a$, ring of quotients with respect to the multiplicative subset $\{1,a,a^2,...\}$, and define the restriction maps $$res_{U_b,U_a}:R_b\rightarrow R_a,~~~~~~r/b^m\mapsto t^mr/a^{nm}.$$
Since, by Lemma \ref{E:cont}, we have  $U_a\subseteq U_b$ if and only if $a^n=tb$ for some positive integer $n$ and $t\in R$, the map $res_{U_b,U_a}$ is well-defined.

 For an arbitrary open set $U$ of $QPrim(R)$ let
	$$\mathcal{F}(U)=\varprojlim \mathcal{F}(U_a)$$
	where the projective limit is taken over all $U_a\subseteq U$ relative to the system of homomorphisms $res_{U_b,U_a}$ for $U_a\subseteq U_b$.
	 
	For $U\subseteq V$, each family $\{v_i\}\in \mathcal{F}(V)$ consisting of $v_i\in R_{a_i}$ with $ U_{a_i}\subseteq V$ defines a subfamily $\{a_j\}$ consisting of the $a_j$ for those indexes $j$ with $ U_{a_j}\subseteq U$. Then $\{v_j\}\in \mathcal{F}(U)$. Define
	$$res_{V,U}:\mathcal{F}(V)\rightarrow \mathcal{F}(U),~~~~\{v_i\}\mapsto \{v_j\}~~.$$
	
	With this construction, $\mathcal{F}$ turns to be a sheaf of rings on $QPrim(R)$.
	In fact, this sheaf is the direct image sheaf under the inclusion map from $ Spec(R)$ into $QPrim(R)$:
	\begin{theorem} The sheaf $\mathcal{F}$ on $QPrim(R)$ is equal to the direct image sheaf $\iota_*$ under the inclusion map $\iota: Spec(R)\rightarrow QPrim(R)$. 
		\end{theorem}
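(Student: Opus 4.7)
The plan is to verify that both sheaves agree on the distinguished basis $\{U_a\}_{a\in R}$ and then to extend the agreement to all open sets via the projective-limit construction used in the definition of $\mathcal{F}$.

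First, I would compute $\iota^{-1}(U_a)$ explicitly. Since every prime ideal $P$ satisfies $\sqrt{P}=P$, the preimage is
$$\iota^{-1}(U_a)=\{P\in Spec(R):a\notin P\}=D(a),$$
the standard distinguished open of $Spec(R)$. By the well-known computation of the structure sheaf of $Spec(R)$, $\mathcal{O}_{Spec(R)}(D(a))=R_a$, and therefore
$$(\iota_{*}\mathcal{O}_{Spec(R)})(U_a)=\mathcal{O}_{Spec(R)}(\iota^{-1}(U_a))=R_a=\mathcal{F}(U_a).$$

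Second, I would check that the restriction morphisms coincide on the basis. For $U_a\subseteq U_b$, Lemma \ref{E:cont} gives $a^n=tb$ for some positive integer $n$ and some $t\in R$, and both the restriction $R_b\to R_a$ defined for $\mathcal{F}$ and the one coming from the structure sheaf of $Spec(R)$ are the canonical localization map $r/b^m\mapsto t^m r/a^{nm}$. Hence $\mathcal{F}$ and $\iota_{*}\mathcal{O}_{Spec(R)}$ are isomorphic as presheaves on the basis $\{U_a\}_{a\in R}$, compatibly with restrictions.

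Third, I would extend this isomorphism to arbitrary open sets. The sheaf $\mathcal{F}$ on a general open $U$ is defined as the projective limit over all $U_a\subseteq U$ with respect to the maps $res_{U_b,U_a}$. For the other side, since $\{U_a\}_{a\in R}$ is a basis and $\iota^{-1}$ preserves arbitrary unions, we have $\iota^{-1}(U)=\bigcup_{U_a\subseteq U}D(a)$; applying the sheaf property of $\mathcal{O}_{Spec(R)}$ to this cover by distinguished opens yields
$$(\iota_{*}\mathcal{O}_{Spec(R)})(U)=\mathcal{O}_{Spec(R)}\Bigl(\bigcup_{U_a\subseteq U}D(a)\Bigr)=\varprojlim R_a,$$
where the limit is over the same directed system of basic opens used in the definition of $\mathcal{F}(U)$, with identical transition maps by the previous step. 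The basis-level identification then assembles into a sheaf isomorphism on all open sets of $QPrim(R)$.

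The main obstacle I expect is of a bookkeeping nature: confirming that the two projective systems are indexed the same way and connected by matching morphisms, so that the basis identification canonically propagates to all opens. No genuinely new algebra is required once $\iota^{-1}(U_a)=D(a)$ is recorded; the content of the theorem is really that the Zariski topology on $QPrim(R)$ has been set up precisely so that $\iota$ pulls basic opens to basic opens, whereupon the direct image construction is forced to reproduce $\mathcal{F}$.
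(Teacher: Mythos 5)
Your proposal is correct and follows essentially the same route as the paper: compute $\iota^{-1}(U_a)=\{P\in Spec(R):a\notin P\}$, identify the sections $R_a$ and the restriction maps on the basic open sets, and conclude that the two sheaves agree. The only difference is that you explicitly carry the identification from the basis to arbitrary open sets via the projective limit and the sheaf axiom, a bookkeeping step the paper leaves implicit.
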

	\begin{proof}
	The inclusion map $\iota$ is continuous. For any open set $U$ of $QPrim(R)$, direct image sheaf $\iota_*$ is defined as follows:
	$$\iota_*(U)=\mathcal{O}(\iota^{-1}(U))$$
	where $\mathcal{O}$ denotes the structure sheaf on $Spec(R)$. For $a\in R$, we have
	\begin{align}
	\iota^{-1}(U_a)&=\{P\in Spec{R}: \iota(P)\in U_a\}\notag\\
	&=\{P\in Spec{R}: P\in U_a\}\notag\\
	&=\{P\in Spec{R}: a\not\in \sqrt {P}=P\}\notag
	\end{align}
	The final set is a principal open set for $Spec(R)$ and the corresponding ring for this set is $R_a$. So we get
	$\iota_*(U_a)=R_a=\mathcal{F}(U_a)$.
	
	For $U_a\subseteq U_b$, we have $res_{U_b,U_a}=\rho^{X_b}_{X_a}$ where $\rho^{X_b}_{X_a}$ is the restriction map from principal open set $X_b$ to $X_a$ of $Spec(R)$ with respect to the structure sheaf $\mathcal{O}$.
	Thus the sheafs $\mathcal{F}$ and $\iota_*\mathcal{F}$ are the same.
	\end{proof}
Similar to the structure sheaf $\mathcal{O}$ on $Spec(R)$, the stalk $\mathcal{F}_Q$ of $\mathcal{F}$ at a point $Q\in QPrim(R)$ is $R_{\sqrt{Q}}$. Therefore, we conclude that $(QPrim(R),\mathcal{F})$ is a locally ringed space. Finally, since $\mathcal{F}$ is the direct image sheaf of $\mathcal{O}$ under the inclusion map $\iota: Spec(R)\rightarrow QPrim(R)$, it is easily seen that $(QPrim(R),\mathcal{F})$ is a scheme.

	\end{document}